\documentclass[10pt]{amsart} 
\usepackage{amsmath,amssymb,bm, color}

\usepackage{amsmath}
\usepackage{graphicx,float} 

\usepackage{mathrsfs}
\usepackage{bbm}
\usepackage{bm}

\usepackage{amsmath}
\usepackage{graphicx} 
\usepackage{mathrsfs}
\usepackage{appendix}
\usepackage{float}
\usepackage{amsfonts,amssymb}
\usepackage{dsfont}
\usepackage{pifont}
\usepackage{wrapfig} 
\usepackage{hyperref}
\usepackage{multirow}
\numberwithin{equation}{section}
\def\3bar{{|\hspace{-.02in}|\hspace{-.02in}|}}

\def\W{{\mathcal{W}}}

\def\bmu{{\boldsymbol{\mu}}}

\def\cal{\mathcal}

\newtheorem{lemma}{Lemma}[section]
\newtheorem{theorem}{Theorem}[section]

\newtheorem{algorithm}{Least Squares Weak Galerkin Algorithm}[section]

 \def\ad#1{\begin{aligned}#1\end{aligned}}   
 
\def\an#1{\begin{align}#1\end{align}}

\def\p#1{\begin{pmatrix}#1\end{pmatrix}}

\title[]
 {A Least Squares Weak Galerkin Finite Element Method for Fokker-Planck Type Equations}

  \author {Chunmei Wang}
  \address{Department of Mathematics, University of Florida, Gainesville, FL 32611, USA. }
  \email{chunmei.wang@ufl.edu}

\author {Shangyou Zhang}
\address{Department of Mathematical Sciences,  University of Delaware, Newark, DE 19716, USA}   \email{szhang@udel.edu}

\begin{document}

\begin{abstract}
This paper presents a least squares weak Galerkin (LS-WG) finite element method for a class of second order elliptic equations of Fokker-Planck type. To address the numerical challenges arising from non-smooth diffusion tensors, the proposed method utilizes a least-squares formulation that yields a symmetric positive definite (SPD) discrete system. The numerical scheme is designed by employing locally constructed weak second order partial derivatives and the weak divergence commonly used within the weak Galerkin framework. A rigorous theoretical foundation is provided, establishing the uniqueness of the discrete solution and deriving optimal-order error estimates in a discrete energy norm. Finally, extensive numerical experiments are reported to validate the theoretical findings and demonstrate the robustness and performance of the numerical scheme.
\end{abstract}

\keywords{ weak Galerkin, least squares,  finite element methods,
Fokker-Planck equation, weak Hessian, weak gradient, polytopal partitions.}

\subjclass{Primary, 65N30, 65N15, 65N12, 74N20; Secondary, 35B45, 35J50,
35J35}

\maketitle

\section{Introduction} 
Arising from the study of stochastic processes, the Fokker-Planck equation provides an indispensable mathematical apparatus for analyzing statistical fluctuations across physical systems, engineering applications, and biological complexes \cite{fokker,planck,Risken,stratonovich,gardiner}. In the context of statistical mechanics, this second-order partial differential equation governs the temporal evolution of a probability density function for a particle moving under the coupled effects of deterministic drag forces and stochastic disturbances modeled as Gaussian white noise. Structurally, given an open spatial domain $\Omega\subset{\mathbb R}^d$ ($d$-dimensional Euclidean space) and a terminal time horizon $T$, the objective is to find a time-dependent distribution function $p=p(x,t): \ \Omega\times [0,T]\to {\mathbb R}$ that satisfies
\begin{equation}\label{EQ:FPE}
\begin{split}
\partial_t p +\nabla\cdot (\bmu p)-\frac{1}{2}\sum_{i,j=1}^d
\partial_{ij}^2(a_{ij}p) & = \ 0,\qquad
t\in (0, T), \ x\in \Omega,\\
p(x, 0) & = \ p_0(x),\qquad x\in \Omega,
\end{split}
\end{equation}
where $\partial_{ij}^2=\frac{\partial}{\partial x_j}\frac{\partial}{\partial x_i}$ signifies the second-order partial differentiation with respect to coordinates $x_i$ and $x_j$. Within this system, $a(x)=\{a_{ij}(x)\}_{d\times d}$ denotes the diffusion tensor, $\bmu=(\mu_1,\cdots, \mu_d)$ captures the drift vector field, and $p_0=p_0(x)$ indicates the initial density profile. To ensure well-posedness, \eqref{EQ:FPE} is typically closed using one of two standard boundary conditions: a Dirichlet constraint on the density or a Neumann condition on the total flux. A homogeneous Dirichlet boundary condition represents an absorbing barrier where particles immediately exit the system upon impact, whereas a prescribed Neumann flux dictates a known net current of particles moving across the boundary along the outward normal direction.

Designing accurate numerical schemes for the Fokker-Planck equation involves overcoming several fundamental mathematical and computational challenges. Chief among these issues are the curse of high dimensionality and the low regularity inherent to the target probability density function. For example, in multi-particle statistical mechanics, the Fokker-Planck framework models a joint probability density spanning a vast phase space, making the spatial dimension $d$ prohibitively large. Concurrently, when the diffusion coefficients $a(x)$ are non-smooth, the resulting density profile $p(x,t)$ often develops steep, shock-like discontinuities that are notoriously difficult to resolve on discrete meshes. Beyond capturing these sharp interfaces, any viable numerical algorithm must also strictly respect physical invariants, including total mass conservation and the pointwise non-negativity of the computed density field.

To simulate these probability fields, a variety of finite element techniques have been proposed in the literature; see \cite{bhandari-sherrer,langley,langtangen,bergman,spencer-bergman,masud,kumar-narayana} and the references therein. For instance, in \cite{bhandari-sherrer,langley}, the stationary version of the equation was treated using standard Galerkin finite element methods based on a weak formulation derived via classical integration by parts. To enforce the global integral constraint associated with probability preservation, the authors in \cite{langtangen} augmented a Galerkin scheme with a generalized Lagrange multiplier approach. In \cite{bergman}, conventional continuous $C^0$ elements were applied to Fokker-Planck systems driven by both additive and multiplicative white noise processes, while a multiscale finite element architecture was introduced in \cite{masud} to analyze stochastic structural dynamics in high dimensions. A major limitation of these existing methodologies is their strict reliance on a highly smooth or constant diffusion tensor $a(x)$, which is necessary to ensure the validity of the underlying weak forms. While a $C^0$ continuous finite element framework was implemented and tested for the transient equation under non-smooth diffusion coefficients in \cite{kumar-narayana}, their work was purely empirical and lacked a formal mathematical convergence analysis.

When the diffusion coefficients possess sufficient regularity, the second-order differential operator can be expanded using the product rule to yield the following equivalent identity:
\begin{equation}\label{EQ:Reformulation}
\frac12 \partial_{ij}^2(a_{ij}p) = \frac12 \partial_j (a_{ij}(x) \partial_i p) + \frac12
\partial_j ( (\partial_i a_{ij}) p).
\end{equation}
This algebraic restructuring effectively transforms \ \eqref{EQ:FPE} into a standard \ time-dependent convection-diffusion equation in divergence form. Setting aside high-dimensional bottlenecks, this reformulated system represents a relatively straightforward problem for conventional numerical methods. However, when the diffusion tensor is non-smooth, the expansion \eqref{EQ:Reformulation} breaks down completely due to the non-differentiability of the coefficients. In this rough regime, the exact solution $p(x,t)$ exhibits unmapped internal discontinuities, making classical finite element approximations highly unstable or inapplicable. The primary objective of this paper is to construct a novel finite element framework that successfully circumvents this structural limitation, providing a stable, accurate, and rigorous approach specifically designed to handle non-smooth diffusion tensors in the Fokker-Planck equation.

To streamline the presentation, we consider a Fokker-Planck type model equation subject to a homogeneous Dirichlet boundary condition. The model problem seeks an unknown function $u=u(x)$ satisfying
\begin{equation}\label{model}
\begin{split}
\nabla \cdot (\bmu u)-\frac{1}{2}\sum_{i,j=1}^d \partial^2_{ij}(a_{ij}u)=&f,\quad \text{in}\ \Omega,\\
u =& 0,\quad \text{on}\ \partial\Omega,
\end{split}
\end{equation}
where $\Omega$ is an open, bounded domain in $\mathbb{R}^d (d=2, 3)$ with a Lipschitz continuous boundary $\partial\Omega$, and $f\in L^2(\Omega)$ is a prescribed source term. 
We assume 1) the diffusion tensor $a(x)=\{a_{ij}(x)\}_{d\times d}\in (W_2^\infty (\Omega))^{d\times d}$ is   symmetric, positive definite, and uniformly bounded across the domain $\Omega$; 2) the drift vector satisfies the regularity condition $\bmu \in (W_1^\infty (\Omega))^d$.

  The Weak Galerkin (WG) finite element method \cite{wg1, wg2, wg3, wg4, wg5, wg6, wg7, wg8, wg9, wg10, wg11, wg12, wg13, wg14, wg15, wg16, wg17, wg18, wg19, wg20, wg21, itera, wz2023, wy3655, guan, guan2} provides a highly adaptable alternative to conventional continuous finite element frameworks. The core mechanism of the WG approach relies on the deployment of discrete weak derivatives, coupled with targeted stabilization forms that enforce weak continuity along element interfaces. This structural flexibility allows the WG methodology to seamlessly operate on arbitrary polygonal and polyhedral mesh configurations. 

Building upon these foundational principles, the Primal-Dual Weak Galerkin (PDWG) method \cite{pdwg1, pdwg2, pdwg3, pdwg4, pdwg5, pdwg6, pdwg7, pdwg8, pdwg9, pdwg10, pdwg11, pdwg12, pdwg13, pdwg14, pdwg15} was designed to cast numerical approximations as constrained optimization problems. The PDWG architecture delivers enhanced numerical stability, particularly when applied to non-self-adjoint systems such as linear transport models \cite{wwhyperbolic}. However, this theoretical advantage comes with a computational trade-off: the necessary inclusion of dual variables inherently expands the total number of global unknowns within the discrete system.

To address this limitation, the proposed Least-Squares Weak Galerkin (LS-WG) method offers several transformative advantages for the Fokker-Planck model problem \eqref{model}. The least-squares formulation naturally yields a symmetric positive-definite (SPD) discrete system. This property is a significant computational asset, permitting the direct application of high-performance iterative solvers such as the Conjugate Gradient (CG) method. By marrying the inherent stability of least-squares minimization with the geometric versatility of weak derivatives, the LS-WG approach provides a robust and scalable numerical platform.

In this work, we provide a rigorous theoretical foundation for the LS-WG scheme, establish the well-posedness and uniqueness of the discrete solution, and derive optimal-order error estimates in a discrete energy norm. Extensive numerical experiments are subsequently presented to validate these theoretical findings and demonstrate the overall robustness of the method.

We will follow the usual notation for Sobolev spaces
and norms. For any open bounded domain $D\subset \mathbb{R}^d$ with
Lipschitz continuous boundary, we use $\|\cdot\|_{s,D}$ and
$|\cdot|_{s,D}$ to denote the norm and seminorms in the Sobolev
space $H^s(D)$ for any $s\ge 0$, respectively. The inner product
in $H^s(D)$ is denoted by $(\cdot,\cdot)_{s,D}$. The space
$H^0(D)$ coincides with $L^2(D)$, for which the norm and the inner
product are denoted by $\|\cdot \|_{D}$ and $(\cdot,\cdot)_{D}$,
respectively. When $D=\Omega$, we shall drop the subscript $D$ in
the norm and inner product notation. 

 The paper is organized as follows. In Section 2, we shall discuss the computation of weak divergence and weak second order partial derivatives. In Section 3, we will present a detailed description of the least squares weak Galerkin finite element method for the Fokker-Planck type model problem   and study the solution existence and uniqueness for our numerical method. In Section 4, we will establish an error estimate in a discrete norm. Finally, in Section 5, we report some numerical results to demonstrate the performance of the LS-WG finite element method.

\section{Weak Differential Operators} 
The goal of this section is to formally introduce the continuous and discrete weak differential operators. To this end, let $T$ be a polygonal or polyhedral element with boundary $\partial T$. By a weak function on $T$, we mean a triplet $v=\{v_0,v_b,\mathbf{v}_g\}$ in which $v_0\in L^2(T)$ represents the   value in the interior of $T$, $v_b\in L^{2}(\partial T)$ represents the value on the boundary of $T$, and $\mathbf{v}_g=(v_{g1},\cdots,v_{gd})\in [L^{2}(\partial T)]^d$ represents the gradient of $v$ on $\partial T$. In general, the boundary components $v_b$ and $\mathbf{v}_g$ are not required to be  equal to the traditional traces of $v_0$ and $\nabla v_0$ on $\partial T$. We denote the space of all well-defined weak functions on $T$ by $\W(T)$:
\begin{equation}\label{2.1}
\W(T)=\{v=\{v_0,v_b,\mathbf{v}_g\}: v_0\in L^2(T), v_b\in L^{2}(\partial T), \mathbf{v}_g\in [L^{2}(\partial T)]^d\}.
\end{equation}

For any $v\in \W(T)$, the weak second-order partial derivative  of $ a_{ij}v$, denoted by $\partial^2_{ij,w} (a_{ij}v)$, is defined as a linear functional in the dual space of $H^2(T)$ satisfying
\begin{equation}\label{2.3}
(\partial^2_{ij,w}(a_{ij}v),\varphi)_T=(a_{ij}v_0,\partial^2_{ji}\varphi)_T- \langle a_{ij}v_b n_i,\partial_j\varphi\rangle_{\partial T}+ \langle a_{ij} v_{gi},\varphi n_j\rangle_{\partial T},
\end{equation}
for all $\varphi\in H^2(T)$. Here, $\mathbf{n}=(n_1,\cdots,n_d)$ denotes the unit outward normal vector to $\partial T$, $(\cdot,\cdot)_T$ represents the standard $L^2$ inner product in $L^2(T)$, and $\langle \cdot, \cdot\rangle_{\partial T}$ is the $L^2$ inner product on the boundary $\partial T$.

Analogously, for any $v\in \W(T)$, the weak divergence of   $\bmu v$, denoted by $\nabla_w \cdot(\bmu v)$, is defined as a linear functional in the dual space of $H^1(T)$ satisfying
\begin{equation}\label{2.4_continuous}
(\nabla_w \cdot(\bmu v), \psi)_T = -(\bmu v_0, \nabla \psi)_T + \langle (\bmu \cdot \mathbf{n}) v_b, \psi \rangle_{\partial T},
\end{equation}
for all $\psi\in H^1(T)$.

Let $P_r(T)$ denote the space of polynomials on $T$ of degree no greater than $r$. A discrete version of the weak second-order partial derivative $\partial^2_{ij,w} (a_{ij}v)$ for $v\in \W(T)$, denoted by $\partial^2_{ij,w,r,T} (a_{ij}v)$, is defined as the unique polynomial in $P_r(T)$ satisfying
\begin{equation}\label{2.4}
(\partial^2_{ij,w,r,T}(a_{ij}v),\varphi)_T=(a_{ij}v_0,\partial^2_{ji}\varphi)_T-\langle a_{ij}v_b n_i,\partial_j\varphi\rangle_{\partial T} +\langle a_{ij}v_{gi},\varphi n_j\rangle_{\partial T},
\end{equation}
for all $\varphi \in P_r(T)$. Employing standard integration by parts, this definition yields the equivalent representation:
\begin{equation}\label{dissec}
\begin{split}
 (\partial^2_{ij,w,r,T}(a_{ij}v),\varphi)_T&=(\partial^2
 _{ ij}(a_{ij}v_0),\varphi)_T+\langle a_{ij} (v_0-v_b) n_i,\partial_j\varphi\rangle_{\partial T}
\\&-\langle  a_{ij}(\partial_i   v_0-v_{gi}),\varphi n_j\rangle_{\partial T},
 \end{split}
 \end{equation} 
for all $\varphi \in P_r(T)$.

Correspondingly, the discrete weak divergence operator $\nabla_{w,r,T} \cdot (\bmu v)$ for any $v\in \W(T)$ is defined as the unique polynomial in $P_r(T)$ that satisfies
\begin{equation}\label{disgradient}
(\nabla_{w,r,T} \cdot (\bmu v), \psi)_T = -(\bmu v_0, \nabla \psi)_T + \langle (\bmu \cdot \mathbf{n}) v_b, \psi \rangle_{\partial T}, \quad \forall \psi \in P_r(T).
\end{equation}
Applying integration by parts to the volume term in \eqref{disgradient} provides the alternative formulation:
\begin{equation}\label{disdiv}
(\nabla_{w,r,T} \cdot (\bmu v), \psi)_T = (\nabla \cdot (\bmu v_0), \psi)_T + \langle (\bmu \cdot \mathbf{n})(v_b - v_0), \psi \rangle_{\partial T}, \quad \forall \psi \in P_r(T).
\end{equation}

\section{Least Squares Weak Galerkin   Scheme}\label{Section:WGFEM}
The primary objective of this section is to formulate a least-squares weak Galerkin  finite element method for the model problem \eqref{model}. To this end, let $\mathcal{T}_h$ be a finite element partition of the domain $\Omega$ consisting of polygons in $\mathbb{R}^2$ or polyhedra in $\mathbb{R}^3$, which is shape-regular in the sense described in \cite{wy3655}. We denote the set of all edges (in 2D) or faces (in 3D) in $\mathcal{T}_h$ by $\mathcal{E}_h$, and define the set of all interior edges or faces as $\mathcal{E}_h^0=\mathcal{E}_h\setminus\partial\Omega$. For each element $T\in\mathcal{T}_h$, $h_T$ denotes its diameter, and the global mesh size of the partition is given by $h=\max_{T\in\mathcal{T}_h}h_T$.

Let $k\ge 1$ be a specified integer. On each element $T\in\mathcal{T}_h$, we define the local discrete weak function space $V(T)$ as:
\an{\label{V-T}
  V(T)= \{\{v_0, v_b, \mathbf{v}_g\}:\ v_0\in P_k(T),\ v_b\in P_k(e),\ \mathbf{v}_g\in [P_{k-1}(e)]^d,\ e\subset \partial T \}. }
By patching $V(T)$ over all elements $T\in\mathcal{T}_h$ via common value of $v_b$   on the interior edges or faces, we construct the global weak finite element space $V_h$:
$$V_h= \{\{v_0, v_b, \mathbf{v}_g\}:\ \{v_0, v_b, \mathbf{v}_g\}|_T\in V(T),\ \forall T\in\mathcal{T}_h \}.$$
To incorporate the homogeneous Dirichlet boundary condition, we define $V_h^0$ as the subspace of $V_h$ with vanishing boundary value for $v_b$ on $\partial\Omega$:
$$V_h^0= \{\{v_0, v_b, \mathbf{v}_g\}\in V_h:\ v_b|_e=0,\ \forall e\subset \partial\Omega \}.$$

For simplicity of notation, we denote the discrete weak divergence $\nabla_{w,k-1,T}\cdot(\bmu\sigma)$ computed using \eqref{disgradient} on each element $T$ with $r=k-1$ simply by $\nabla_w\cdot(\bmu\sigma)$:
$$(\nabla_w\cdot(\bmu\sigma))|_T=\nabla_{w,k-1,T}\cdot(\bmu\sigma|_T),\qquad \forall \sigma\in V_h.$$
Analogously, $\partial^2_{ij, w}(a_{ij}\sigma)$ denotes the discrete weak second-order partial derivative $\partial^2_{ij,w,k-1,T}(a_{ij}\sigma)$ computed via \eqref{2.4} on each element $T$ with $r=s$ where  $s=k-1$ or $s=k-2$ for $k\geq 2$ and $s=k-1$ for $k=1$:
$$(\partial^2_{ij, w}(a_{ij}\sigma))|_T=\partial^2_{ij,w,s,T}(a_{ij}\sigma|_T),\qquad \forall \sigma\in V_h.$$

Next, we introduce the following global bilinear forms on $V_h\times V_h$:
$$\begin{aligned}
a(u, v) &= \sum_{T\in\mathcal{T}_h} a_T(u, v), \\
s(u, v) &= \sum_{T\in\mathcal{T}_h} s_T(u, v),
\end{aligned}$$
where the local bilinear forms are defined on each element $T$ as
$$a_T(u, v)= (\nabla_w\cdot(\bmu u)-\frac{1}{2}\sum_{i,j=1}^d \partial^2_{ij, w}(a_{ij}u),\ \nabla_w\cdot(\bmu v)-\frac{1}{2}\sum_{i,j=1}^d \partial^2_{ij, w}(a_{ij}v) )_T,$$
and
$$s_T(u, v)=h_T^{-3}\langle u_0-u_b, v_0-v_b\rangle_{\partial T} + h_T^{-1}\langle\nabla u_0-\mathbf{u}_g, \nabla v_0-\mathbf{v}_g\rangle_{\partial T}.$$

We are now in a position to state the least-squares weak Galerkin finite element scheme for the model problem \eqref{model}.

\begin{algorithm}\label{lswg}
Let $k\ge 1$ be a given integer. The numerical approximation for the solution of \eqref{model} seeks $u_h\in V_h^0$ satisfying
\begin{equation}\label{ls}
a(u_h, v) + s(u_h, v) = \sum_{T\in\mathcal{T}_h} (f,\ \nabla_w\cdot(\bmu v)-\frac{1}{2}\sum_{i,j=1}^d \partial^2_{ij, w}(a_{ij}v) )_T,
\end{equation}
for all $v\in V_h^0$.
\end{algorithm}

The well-posedness of this numerical scheme is established by the following lemma.

\begin{lemma}\label{norm1}
Assume the continuous model problem \eqref{model} possesses a unique solution. Then, the least-squares weak Galerkin scheme defined in Algorithm \ref{lswg} has a unique solution $u_h\in V_h^0$.
\end{lemma}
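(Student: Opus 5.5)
Since the scheme \eqref{ls} is a square linear system on the finite-dimensional space $V_h^0$, existence is equivalent to uniqueness. It therefore suffices to show that $f=0$ forces $u_h=0$. I take $f=0$ and test \eqref{ls} with $v=u_h$. This gives $a(u_h,u_h)+s(u_h,u_h)=0$. Both forms are sums of squares, so each piece vanishes:
\begin{equation*}
\nabla_w\cdot(\bmu u_h)-\frac12\sum_{i,j=1}^d\partial^2_{ij,w}(a_{ij}u_h)=0 \ \text{on each } T,\qquad u_0=u_b,\ \ \nabla u_0=\mathbf{u}_g \ \text{on each } \partial T.
\end{equation*}

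The next step is to turn these conditions into a statement about $u_0$ alone. Since $u_b$ is single-valued on interior faces, $u_0=u_b$ makes $u_0$ continuous across faces, so $u_0\in H^1_0(\Omega)$ because $u_b=0$ on $\partial\Omega$. Whether $\nabla u_0$ is continuous depends on how $\mathbf{u}_g$ is patched in $V_h$. If $\mathbf{u}_g$ is single-valued on faces, then $\nabla u_0$ is continuous and $u_0\in H^2(\Omega)\cap H^1_0(\Omega)$. If instead only $v_b$ is shared, I use a weaker fact: the tangential part of $\nabla u_0$ is already continuous, and the normal-derivative jumps enter only through the boundary terms below. Substituting $u_0-u_b=0$ and $\nabla u_0-\mathbf{u}_g=0$ into \eqref{dissec} and \eqref{disdiv} gives, for all $\varphi\in P_s(T)$,
\begin{equation*}
\Big(\nabla\cdot(\bmu u_0)-\frac12\sum_{i,j}\partial^2_{ij}(a_{ij}u_0),\varphi\Big)_T=0 .
\end{equation*}
For this I use $s\le k-1$, so that $P_s(T)\subset P_{k-1}(T)$ and the same test functions serve both operators. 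Hence on each element the strong residual $\mathcal{L}u_0$ is $L^2$-orthogonal to $P_s(T)$.

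The main obstacle is to upgrade this orthogonality to $\mathcal{L}u_0=0$ in $\Omega$. After that, uniqueness of the continuous problem \eqref{model} gives $u_0=0$, and then $u_b=u_0|_{\partial T}=0$ and $\mathbf{u}_g=\nabla u_0|_{\partial T}=0$, so $u_h=0$. The difficulty is that $\mathcal{L}u_0$ is not a polynomial when $a_{ij}$ and $\bmu$ vary. I plan to argue through the dual (adjoint) formulation: for $\psi\in C_0^\infty(\Omega)$, write $(\mathcal{L}u_0,\psi)$ as $\sum_T(\mathcal{L}u_0,\psi-Q_s\psi)_T$, where $Q_s$ is the elementwise $L^2$ projection onto $P_s$, and show this equals zero. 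The bound $\|\psi-Q_s\psi\|\lesssim h^{s+1}$ only makes it small, so it does not finish the argument. I therefore expect the rigorous argument to need one of two routes. The first is to assume, as is customary in this WG literature, that the coefficients are piecewise polynomials of degree low enough that $\mathcal{L}u_0|_T\in P_s(T)$. The second is to use that $u_0\in P_k(T)$ with $\nabla u_0$ matching across faces, and show that the $h$-independent finite-dimensional kernel argument forces $u_0=0$, for example by the standard WG device of proving $\|\cdot\|$ is a norm via the inf-sup of the continuous problem. I would try the first route, since it gives exact orthogonality. I would then state explicitly the assumption, implicit in the phrase ``Assume the continuous model problem possesses a unique solution,'' that the weak formulation $(u,\mathcal{L}^*\psi)=0$ for all smooth $\psi$ implies $u=0$. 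This closes the argument via the adjoint form \eqref{2.3}, once the $u_0=u_b$ and $\nabla u_0=\mathbf{u}_g$ identities show that the weak and classical derivatives of $u_0$ coincide.
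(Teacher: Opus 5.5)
Your skeleton is the paper's: reduce to $f=0$, test with $v=u_h$, read off $u_0=u_b$ and $\nabla u_0=\mathbf{u}_g$ on every $\partial T$, then invoke uniqueness for \eqref{model}. But two specific steps fail as written.

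\textbf{The branch where only $v_b$ is shared.} Tangential continuity cannot rescue this branch. After you substitute $\nabla u_0=\mathbf{u}_g$ into \eqref{dissec}, the only terms that could detect a normal-derivative jump vanish identically. Such jumps are therefore invisible to $a(\cdot,\cdot)+s(\cdot,\cdot)$, and in this branch the lemma is false. Take $\bmu=0$ and $a=4I$ as in \eqref{sol1}, and let $\phi$ be the continuous piecewise linear hat function at an interior vertex of a triangular mesh. Set $u_0|_T=\phi|_T$, $u_b=\phi$ on each face, and $\mathbf{u}_g=\nabla(\phi|_T)$ on $\partial T$. Then $s(u_h,u_h)=0$, and \eqref{dissec} gives $\partial^2_{ij,w}(a_{ij}u_h)=0$. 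Hence $a(u_h,u_h)+s(u_h,u_h)=0$ with $u_h\neq 0$, even though $-2\Delta u=f$ with $u|_{\partial\Omega}=0$ is uniquely solvable. Single-valuedness of $\mathbf{u}_g$ on interior faces is therefore essential. The paper's proof uses exactly this (``the single-valued boundary functions $u_b$ and $\mathbf{u}_g$'') to obtain $u_0\in H^2(\Omega)\cap H^1_0(\Omega)$, so you should commit to that branch and drop the other.

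\textbf{The ``main obstacle'' and your route 1.} Your diagnosis is right. Identities \eqref{dissec} and \eqref{disdiv} only give
$\nabla_w\cdot(\bmu u_h)=\mathcal{Q}^{(k-1)}_h\nabla\cdot(\bmu u_0)$ and $\partial^2_{ij,w}(a_{ij}u_h)=\mathcal{Q}^{(s)}_h\partial^2_{ij}(a_{ij}u_0)$. The paper's assertion that these equal the classical derivatives holds when $a$ and $\bmu$ are elementwise constant (the simplification it states in Section 4), not for general variable coefficients. Neither the paper nor your route 2 gives an argument for that general case.

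However, route 1 as you formulated it does not even cover the constant-coefficient case. Merging both test spaces into $P_s(T)$ throws information away. For $s=k-2$ and $\bmu\neq 0$, $\mathcal{L}u_0|_T$ contains $\bmu\cdot\nabla u_0$, which has degree $k-1$, so your hypothesis $\mathcal{L}u_0|_T\in P_s(T)$ fails. For $k=2$, $s=0$ you would only learn that $\mathcal{L}u_0$ has mean zero on $T$.

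Keep the two projections separate instead. With elementwise constant coefficients, $\bmu\cdot\nabla u_0\in P_{k-1}(T)$ and $a_{ij}\partial^2_{ij}u_0\in P_{k-2}(T)\subset P_s(T)$. Both projections then act as the identity, and the vanishing residual is literally $\mathcal{L}u_0=0$ in $T$.

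Then conclude as the paper does: $u_0\in H^2(\Omega)\cap H^1_0(\Omega)$ with $\mathcal{L}u_0=0$ a.e., so uniqueness gives $u_0=0$. Avoid your detour through $(u_0,\mathcal{L}^*\psi)=0$. For coefficients that jump across element faces, elementwise integration by parts would leave interface terms involving jumps of $a_{ij}$ and $\bmu\cdot\mathbf{n}$ that do not cancel.
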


\begin{proof}
Since the discrete system is square and finite-dimensional, it suffices to show that the solution to \eqref{ls} is identically zero when $f=0$. Assuming $f=0$ and choosing the test function $v=u_h$ in \eqref{ls}, we obtain
$$\sum_{T\in\mathcal{T}_h} \|\nabla_w\cdot(\bmu u_h)-\frac{1}{2}\sum_{i,j=1}^d \partial^2_{ij, w}(a_{ij}u_h) \|_T^2 + s(u_h, u_h)=0.$$
Because both terms on the left-hand side are strictly non-negative, this implies that on each element $T\in\mathcal{T}_h$,
\begin{equation}\label{eq:proof1}
\nabla_w\cdot(\bmu u_h)-\frac{1}{2}\sum_{i,j=1}^d \partial^2_{ij, w}(a_{ij}u_h)=0,
\end{equation}
and from the stabilization term $s(u_h, u_h)=0$, we deduce
\begin{equation}\label{eq:proof2}
u_0=u_b \quad \text{and} \quad \nabla u_0=\mathbf{u}_g \quad \text{on}\ \partial T.
\end{equation}
Combining \eqref{eq:proof2} with the  definitions of the discrete weak operators in \eqref{dissec} and \eqref{disdiv} yields
$$\nabla_w\cdot(\bmu u_h)=\nabla\cdot(\bmu u_0), \qquad \partial^2_{ij, w}(a_{ij}u_h)=\partial^2_{ij}(a_{ij}u_0) \quad \text{in}\ T$$
Applying these identities directly to \eqref{eq:proof1}, we see that $u_0$ satisfies the governing partial differential equation strongly within each element:
\begin{equation}\label{eq:proof3}
\nabla\cdot(\bmu u_0)-\frac{1}{2}\sum_{i,j=1}^d \partial^2_{ij}(a_{ij}u_0)=0 \quad \text{in}\ T.
\end{equation}
Moreover, the constraint equations $u_0=u_b$ and $\nabla u_0=\mathbf{u}_g$ on each $\partial T$ guarantee that $u_0$ and its gradient $\nabla u_0$ continuously match the single-valued boundary functions $u_b$ and $\mathbf{u}_g$ across all interior interfaces. This global $C^1$-continuity implies that $u_0\in H^2(\Omega)$. Therefore, the local relationship established in \eqref{eq:proof3} holds globally across $\Omega$.

Additionally, $u_0=u_b$ on each $\partial T$, and since $u_h\in V_h^0$, we are given $u_b=0$ on $\partial\Omega$. This enforces the boundary condition $u_0=0$ on $\partial\Omega$. By the uniqueness assumption of the continuous model problem, it follows that $u_0\equiv 0$ everywhere in $\Omega$. This subsequently forces $u_b\equiv 0$ and $\mathbf{u}_g\equiv\mathbf{0}$ globally, which completes the proof.
\end{proof}

Finally, we introduce a semi-norm on the space $V_h$ defined by:
$$\3bar v\3bar^2=a(v, v) + s(v, v).$$
Following the exact same algebraic arguments utilized in the proof of Lemma \ref{norm1}, one can easily verify that $\3bar\cdot\3bar$ defines a rigorous norm on the constrained subspace $V_h^0$.

\section{Error Estimates}
In this section, we establish optimal-order error estimates for the proposed LS-WG finite element method. Throughout the analysis, the mesh partition $\mathcal{T}_h$ is assumed to be shape-regular. For any element $T \in \mathcal{T}_h$ and any function $\phi \in H^1(T)$, we rely on the following standard continuous trace inequality \cite{wy3655}:
\begin{equation}\label{tracein}
\|\phi\|^2_{\partial T} \leq C  ( h_T^{-1}\|\phi\|_T^2 + h_T \|\nabla \phi\|_T^2  ).
\end{equation}

\begin{lemma}
For any $u, v \in V_h$, the bilinear forms satisfy the following Cauchy-Schwarz type boundedness property:
\begin{equation*}
a(u, v) + s(u, v) \leq \3bar u \3bar \, \3bar v \3bar.
\end{equation*}
\end{lemma}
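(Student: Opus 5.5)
The bound is just the Cauchy--Schwarz inequality for the symmetric positive semidefinite bilinear form $a(\cdot,\cdot)+s(\cdot,\cdot)$ on $V_h$. I would nonetheless write it out term by term, since $\3bar\cdot\3bar$ is only a seminorm on all of $V_h$ and the abstract statement should be checked without assuming definiteness.

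Fix $u,v\in V_h$ and write $L_T(w)=\nabla_w\cdot(\bmu w)-\frac{1}{2}\sum_{i,j=1}^d \partial^2_{ij, w}(a_{ij}w)$ on each $T\in\mathcal{T}_h$. Then $a_T(u,v)=(L_T(u),L_T(v))_T$. Likewise $s_T(u,v)$ is the sum of two boundary $L^2$ inner products, namely of $h_T^{-3/2}(u_0-u_b)$ with $h_T^{-3/2}(v_0-v_b)$, and of $h_T^{-1/2}(\nabla u_0-\mathbf{u}_g)$ with $h_T^{-1/2}(\nabla v_0-\mathbf{v}_g)$. The weights are positive, so they can be split symmetrically between the two factors in this way.

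The first step applies the Cauchy--Schwarz inequality in $L^2(T)$ and in $L^2(\partial T)$ to each of these three local inner products. This bounds each one by the product of the corresponding local quantities $\alpha_T(u)\alpha_T(v)$, $\beta_T(u)\beta_T(v)$ and $\gamma_T(u)\gamma_T(v)$. Here
\[
\alpha_T(w)=\|L_T(w)\|_T,\qquad \beta_T(w)=h_T^{-3/2}\|w_0-w_b\|_{\partial T},\qquad \gamma_T(w)=h_T^{-1/2}\|\nabla w_0-\mathbf{w}_g\|_{\partial T}.
\]

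The second step sums over $T$ and over the three types of terms, then applies the discrete Cauchy--Schwarz inequality $\sum_m x_m y_m\le(\sum_m x_m^2)^{1/2}(\sum_m y_m^2)^{1/2}$ to the collection indexed by pairs (element, term type). Since
\[
\sum_{T\in\mathcal{T}_h}\bigl(\alpha_T(w)^2+\beta_T(w)^2+\gamma_T(w)^2\bigr)=a(w,w)+s(w,w)=\3bar w\3bar^2,
\]
this yields $a(u,v)+s(u,v)\le \3bar u\3bar\,\3bar v\3bar$.

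There is no genuine obstacle here. The only point needing care is to keep the $h_T$ weights split as $h_T^{-3/2}\cdot h_T^{-3/2}$ and $h_T^{-1/2}\cdot h_T^{-1/2}$ across the two factors, so that the squared sums reproduce exactly the definition of $\3bar\cdot\3bar$.
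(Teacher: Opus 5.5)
Your proposal is correct and follows the paper's route: the paper simply invokes the Cauchy--Schwarz inequality, and you spell out that argument in full. You apply local Cauchy--Schwarz to each of the three inner products, with the $h_T$ weights split symmetrically, and then the discrete Cauchy--Schwarz inequality over elements and term types; your remark that no definiteness on $V_h$ is needed is also accurate.
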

\begin{proof}
    This proof can be achieved easily by using Cauchy-Schwarz inequality. 
\end{proof}

On each element $T \in \mathcal{T}_h$, let $Q_0$ denote the standard $L^2$ projection onto the polynomial space $P_k(T)$ for $k \geq 1$. Similarly, on each edge or face $e \subset \partial T$, let $Q_b$ and $\mathbf{Q}_g := (Q_{g1}, \cdots, Q_{gd})$ denote the $L^2$ projections onto $P_k(e)$ and $[P_{k-1}(e)]^d$, respectively. For any sufficiently smooth function $w \in H^2(\Omega)$, we define the global  projection operator $Q_h w \in V_h$ such that its restriction to each element $T$ is given by:
\begin{equation*} 
Q_h w = \{Q_0 w, Q_b w, \mathbf{Q}_g(\nabla w)\}.
\end{equation*}

For simplicity, we assume  that the diffusion tensor $a=\{a_{ij}\}_{d\times d}$ and the convection coefficient $\bmu$ are piecewise constant. The analysis readily extends to the case where $a$ and $\bmu$ are non-smooth coefficients.

\begin{lemma}\label{Lemma5.1}
 Let $\mathcal{Q}^{(k-1)}_h$ and $\mathcal{Q}^{(s)}_h$  be the $L^2$ projections onto $P_{k-1}(T)$ and $P_{s}(T)$ respectively.   The defined projection operators satisfy the following commutative properties on each element $T \in \mathcal{T}_h$:
\begin{align}
\partial^2_{ij,w}(a_{ij}Q_h w) &= \mathcal{Q}^{(s)}_h(\partial^2_{ij} (a_{ij} w)), \qquad i,j = 1, \ldots, d, \label{l} \\
\nabla_w \cdot (\bmu Q_h w) &= \mathcal{Q}^{(k-1)}_h(\nabla \cdot (\bmu w)). \label{l-2}
\end{align}
\end{lemma}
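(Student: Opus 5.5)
The plan is to test both identities against an arbitrary polynomial on a fixed element $T$. We use the defining relations \eqref{2.4} and \eqref{disgradient} with $v=Q_h w$, and we use the assumption that $a_{ij}$ and $\bmu$ are constant on $T$. This lets the coefficients be pulled in and out of the $L^2$ projections. Since both sides of each identity lie in $P_s(T)$ (respectively $P_{k-1}(T)$), it suffices to show that their inner products with every $\varphi\in P_s(T)$ (respectively $\psi\in P_{k-1}(T)$) agree.

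For \eqref{l}, fix $\varphi\in P_s(T)$ and start from \eqref{2.4}:
$$(\partial^2_{ij,w}(a_{ij}Q_hw),\varphi)_T=(a_{ij}Q_0w,\partial^2_{ji}\varphi)_T-\langle a_{ij}Q_bw\, n_i,\partial_j\varphi\rangle_{\partial T}+\langle a_{ij}Q_{gi}(\partial_i w),\varphi n_j\rangle_{\partial T}.$$
Each projection is then removed in turn.

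\emph{Volume term.} Here $a_{ij}\partial^2_{ji}\varphi\in P_{s-2}(T)\subset P_k(T)$, so $Q_0$ can be dropped.

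\emph{First boundary term.} On each flat face $e$, $n_i$ is constant and $\partial_j\varphi|_e\in P_{s-1}(e)\subset P_k(e)$, so $Q_b$ can be dropped.

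\emph{Second boundary term.} Here $\varphi n_j|_e\in P_s(e)$, and $s\le k-1$ in every admissible choice of $s$. Hence $\mathbf{Q}_g$ onto $[P_{k-1}(e)]^d$ can be dropped.

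Once the projections are gone, we have $(a_{ij}w,\partial^2_{ji}\varphi)_T-\langle a_{ij}w n_i,\partial_j\varphi\rangle_{\partial T}+\langle a_{ij}\partial_i w,\varphi n_j\rangle_{\partial T}$ for $w\in H^2(T)$. Integrating by parts twice, which is exactly the computation behind \eqref{dissec} with $v_0=w$, $v_b=w$, $\mathbf{v}_g=\nabla w$, the boundary terms cancel. We are left with $(\partial^2_{ij}(a_{ij}w),\varphi)_T=(\mathcal{Q}_h^{(s)}\partial^2_{ij}(a_{ij}w),\varphi)_T$, which gives \eqref{l}.

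For \eqref{l-2}, fix $\psi\in P_{k-1}(T)$ and use \eqref{disgradient}:
$$(\nabla_w\cdot(\bmu Q_hw),\psi)_T=-(\bmu Q_0w,\nabla\psi)_T+\langle(\bmu\cdot\mathbf{n})Q_bw,\psi\rangle_{\partial T}.$$
Since $\bmu$ is constant on $T$, we have $\bmu\cdot\nabla\psi\in P_{k-2}(T)\subset P_k(T)$, so $Q_0$ can be dropped. Likewise $(\bmu\cdot\mathbf{n})\psi|_e\in P_{k-1}(e)\subset P_k(e)$, so $Q_b$ can be dropped. A single integration by parts then yields $(\nabla\cdot(\bmu w),\psi)_T=(\mathcal{Q}^{(k-1)}_h\nabla\cdot(\bmu w),\psi)_T$.

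The main point needing care is the degree bookkeeping for the $\mathbf{Q}_g$ term in \eqref{l}. The commutation relies on $\varphi n_j|_e$ lying in $P_{k-1}(e)$, which holds because the scheme restricts $s\in\{k-1,k-2\}$. This is also why the test degree $s$ cannot exceed $k-1$.

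The other delicate point is the piecewise-constant hypothesis on $a_{ij}$ and $\bmu$, together with the flatness of the faces, which makes $n_i$ constant on each $e$. Without these, products such as $a_{ij}\partial^2_{ji}\varphi$ would not be polynomials and the projections could not be dropped. The identities would then hold only up to consistency errors, which would have to be estimated separately in the error analysis.
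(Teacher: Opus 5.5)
Your proof is correct and takes the same route as the paper. Both test against $\varphi\in P_s(T)$ (respectively $\psi\in P_{k-1}(T)$), remove $Q_0$, $Q_b$ and $\mathbf{Q}_g$, and integrate by parts. Your explicit degree bookkeeping (notably that $s\le k-1$ is needed for the $\mathbf{Q}_g$ term) and the use of constant $a_{ij}$, $\bmu$ and $\mathbf{n}$ on each element and face simply make precise what the paper summarizes as ``the properties of the $L^2$ projections.''
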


\begin{proof}
For any test function $\varphi \in P_{s}(T)$ and for any sufficiently smooth $w$, we utilize the definition of the weak second order partial derivative \eqref{2.4}, the properties of the $L^2$ projections $Q_0$, $Q_b$, and $\mathbf{Q}_g$, and standard integration by parts to obtain:
\begin{equation*}
\begin{aligned} &\quad \ 
(\partial^2_{ij,w}(a_{ij}Q_h w), \varphi)_T \\
&= (a_{ij}Q_0 w, \partial^2_{ji}\varphi)_T - \langle a_{ij}Q_b w, \partial_j \varphi \, n_i \rangle_{\partial T} + \langle a_{ij} Q_{gi}(\partial_i w) \, n_j, \varphi \rangle_{\partial T} \\
&= (a_{ij}w, \partial^2_{ji}\varphi)_T - \langle a_{ij} w, \partial_j \varphi \, n_i \rangle_{\partial T} + \langle a_{ij} \partial_i w \, n_j, \varphi \rangle_{\partial T} \\
&= (\partial^2_{ij}(a_{ij}w), \varphi)_T \\
&= (\mathcal{Q}^{(s)}_h(  \partial^2_{ij}(a_{ij}w)), \varphi)_T,
\end{aligned}
\end{equation*}
which directly confirms \eqref{l}.

Similarly, for any test function $\psi \in P_{k-1}(T)$ and smooth $w$, employing the discrete weak divergence definition \eqref{disgradient}, the properties of the $L^2$ projections $Q_0$, $Q_b$, and the integration by parts formula yields:
\begin{equation*}
\begin{aligned}
(\nabla_w \cdot (\bmu Q_h w), \psi)_T &= -(\bmu Q_0w, \nabla \psi)_T + \langle (\bmu \cdot \mathbf{n}) Q_bw, \psi \rangle_{\partial T} \\
&= -(\bmu w, \nabla \psi)_T + \langle (\bmu \cdot \mathbf{n}) w, \psi \rangle_{\partial T} \\
&= (\nabla \cdot (\bmu w), \psi)_T \\
&= (\mathcal{Q}^{(k-1)}_h(\nabla \cdot (\bmu w)), \psi)_T.
\end{aligned}
\end{equation*}
This establishes \eqref{l-2} and completes the proof.
\end{proof}

\begin{theorem}
Let $u_h \in V_h^0$ be the weak Galerkin finite element solution to the model problem \eqref{model} generated by the LS-WG Algorithm \ref{lswg}, and let $u \in H^{k+1}(\Omega)$ be the exact solution of the model problem \eqref{model}. Then, there exists a constant $C > 0$, independent of $h$, such that
\begin{equation}\label{estimate1}
\3bar Q_h u - u_h \3bar \leq C h^k \|u\|_{k+1}.
\end{equation}
\end{theorem}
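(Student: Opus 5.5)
Denote the differential operator by $\mathcal{L}w=\nabla\cdot(\bmu w)-\frac12\sum_{i,j}\partial^2_{ij}(a_{ij}w)$ and its discrete counterpart by $\mathcal{L}_w v=\nabla_w\cdot(\bmu v)-\frac12\sum_{i,j}\partial^2_{ij,w}(a_{ij}v)$ on each $T$. The plan is a standard least-squares error-equation argument. We derive an equation for $e_h:=Q_hu-u_h\in V_h^0$; note $Q_b u=0$ on $\partial\Omega$ since $u=0$ there. Then we test it with $v=e_h$ and bound the consistency terms by $Ch^k\|u\|_{k+1}\3bar e_h\3bar$.

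\textbf{Step 1 (error equation).} By Lemma \ref{Lemma5.1}, on each $T$,
\[
\mathcal{L}_w(Q_hu)=\mathcal{Q}^{(k-1)}_h\nabla\cdot(\bmu u)-\tfrac12\sum_{i,j}\mathcal{Q}^{(s)}_h\partial^2_{ij}(a_{ij}u).
\]
Since $\mathcal{L}u=f$, for any $v\in V_h^0$ we get
\[
a(Q_hu,v)-\sum_T(f,\mathcal{L}_wv)_T=\sum_T(\mathcal{L}_w(Q_hu)-\mathcal{L}u,\mathcal{L}_wv)_T.
\]
Subtracting the scheme \eqref{ls} then gives
\[
a(e_h,v)+s(e_h,v)=\sum_T(\mathcal{L}_w(Q_hu)-\mathcal{L}u,\mathcal{L}_wv)_T+s(Q_hu,v).
\]
The first term on the right equals $\sum_T\big((\mathcal{Q}^{(k-1)}_h-I)\nabla\cdot(\bmu u)-\frac12\sum_{i,j}(\mathcal{Q}^{(s)}_h-I)\partial^2_{ij}(a_{ij}u),\mathcal{L}_wv\big)_T$.

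\textbf{Step 2 (consistency bounds).} The coefficients are piecewise constant, assumed aligned with $\mathcal{T}_h$. Projection approximation gives
\[
\|(I-\mathcal{Q}^{(k-1)}_h)\nabla\cdot(\bmu u)\|_T\le Ch^k\|u\|_{k+1,T},
\]
and Cauchy–Schwarz gives $\le Ch^k\|u\|_{k+1}\,a(v,v)^{1/2}$ for the divergence part. For the second-order part, the projection error is $\|(I-\mathcal{Q}^{(s)}_h)\partial^2_{ij}u\|_T\le Ch^{s+1}\|u\|_{s+3}$. This yields $h^{k-1}\|u\|_{k+1}$ when $s=k-2$ but only $h^{k}$ with $\|u\|_{k+2}$ regularity when $s=k-1$.

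This is the main obstacle. The stated bound with $\|u\|_{k+1}$ is clean only if one avoids comparing to $\mathcal{L}u$ pointwise. I would instead try to write $\mathcal{L}_wv$ against $\mathcal{L}_w Q_h u - \mathcal{Q}\mathcal{L}u=0$ for the case $s=k-1$: both projections are onto $P_{k-1}$, so $\mathcal{L}_w(Q_hu)=\mathcal{Q}^{(k-1)}_h\mathcal{L}u=\mathcal{Q}^{(k-1)}_hf$. Since $\mathcal{L}_wv\in P_{k-1}(T)$, the term $(\mathcal{Q}^{(k-1)}_hf-f,\mathcal{L}_wv)_T$ vanishes identically. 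So for $s=k-1$ the first right-hand term is zero. For $s=k-2$, I would take the analogous orthogonality with a mixed projection, and accept, if necessary, an estimate involving $h^{s+1}$ there.

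\textbf{Step 3 (stabilizer).} Write $s(Q_hu,v)$ with $Q_0u-Q_bu=Q_0u-u+(u-Q_bu)$ on $\partial T$ and $\nabla Q_0u-\mathbf{Q}_g\nabla u$ similarly. Apply the trace inequality \eqref{tracein} and standard $L^2$ projection estimates:
\[
h_T^{-3}\|Q_0u-u\|^2_{\partial T}\le Ch_T^{-4}\big(\|Q_0u-u\|_T^2+h_T^2\|\nabla(Q_0u-u)\|_T^2\big)\le Ch^{2k-2}\ldots
\]
This scaling needs care. Using $\|Q_0u-u\|_T\le Ch^{k+1}\|u\|_{k+1,T}$ gives $h^{-3}h^{2k+1}=h^{2k-2}$, which is suboptimal. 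So instead insert $Q_b(Q_0u)$, which reduces the term to $Q_b(Q_0u-u)$ and keeps the same size. I would therefore need $\|u-Q_0u\|_{\partial T}\le Ch^{k+1/2}$, giving $h^{-3/2}h^{k+1/2}=h^{k-1}$. To reach optimal $h^k$, I would replace $Q_0$ in the stabilizer bound by an $H^2$-conforming comparison, or simply obtain $s(Q_hu,v)\le Ch^{k}\|u\|_{k+1}s(v,v)^{1/2}$ for the gradient term, which scales correctly: $h^{-1/2}h^{k-1/2}=h^{k-1}$. Reconciling these powers of $h$ is where I expect the real difficulty; I would first check it on the gradient jump term.

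\textbf{Step 4 (conclusion).} Take $v=e_h$ and combine Steps 2–3 to get
\[
\3bar e_h\3bar^2\le Ch^k\|u\|_{k+1}\3bar e_h\3bar,
\]
and divide by $\3bar e_h\3bar$.
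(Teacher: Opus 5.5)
\textbf{There is a genuine gap, and it is the same one the paper's own proof has.}

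Your Steps 1--3 follow the paper's route. You derive the error equation $a(e_h,v)+s(e_h,v)=s(Q_hu,v)$ from the commuting identities of Lemma \ref{Lemma5.1} and $\mathcal{L}_wv\in P_{k-1}(T)$. You then bound the stabilizer term with Cauchy--Schwarz, the trace inequality \eqref{tracein} and projection estimates. Your remark that the consistency term vanishes only for $s=k-1$ is more careful than the paper, which applies \eqref{l} in both cases. For $s=k-2$, $(I-\mathcal{Q}^{(k-2)}_h)\partial^2_{ij}(a_{ij}u)$ is not orthogonal to $P_{k-1}(T)$, which leaves an $O(h^{k-1}\|u\|_{k+1})$ term.

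The gap is Step 4: nothing in Steps 2--3 gives $\3bar e_h\3bar^2\le Ch^k\|u\|_{k+1}\3bar e_h\3bar$. Your own scaling is correct. Since $Q_0u|_e\in P_k(e)$ and $\nabla Q_0u|_e\in[P_{k-1}(e)]^d$, one has $Q_0u-Q_bu=Q_b(Q_0u-u)$ and $\nabla Q_0u-\mathbf{Q}_g\nabla u=\mathbf{Q}_g\nabla(Q_0u-u)$. Hence
$h_T^{-3/2}\|Q_0u-Q_bu\|_{\partial T}+h_T^{-1/2}\|\nabla Q_0u-\mathbf{Q}_g\nabla u\|_{\partial T}\le Ch_T^{k-1}\|u\|_{k+1,T}$.
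Your claimed bound $s(Q_hu,v)\le Ch^k\|u\|_{k+1}s(v,v)^{1/2}$ contradicts the arithmetic $h^{-1/2}h^{k-1/2}=h^{k-1}$ you write right next to it. Swapping $Q_0$ for an $H^2$-conforming comparison is not available, because the quantity to be bounded is the fixed $Q_hu-u_h$. The paper's proof stops at exactly the same place: the last line of \eqref{sa} is $Ch^{k-1}\|u\|_{k+1}\3bar e_h\3bar$. Dividing by $\3bar e_h\3bar$ gives $h^{k-1}$, not the $h^k$ of the statement.

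\textbf{The gap cannot be closed, because $h^{k-1}$ is sharp.} In $V_h$ only $v_b$ is shared across interfaces. So $v=\{0,0,\mathbf{v}_g\}$ with $\mathbf{v}_g=-(\nabla Q_0u-\mathbf{Q}_g\nabla u)$ on each $\partial T$ belongs to $V_h^0$. For this $v$, $\nabla_w\cdot(\bmu v)=0$ and $\|\partial^2_{ij,w}(a_{ij}v)\|_T\le Ch_T^{-1/2}\|\mathbf{v}_g\|_{\partial T}$, hence $\3bar v\3bar\le Cs(v,v)^{1/2}$. With $s=k-1$, the error equation gives $s(Q_hu,v)=a(e_h,v)+s(e_h,v)\le\3bar e_h\3bar\,\3bar v\3bar$, and therefore
\[
\3bar e_h\3bar\ \ge\ \frac{s(Q_hu,v)}{\3bar v\3bar}\ \ge\ c\Big(\sum_{T\in\mathcal{T}_h}h_T^{-1}\|\nabla Q_0u-\mathbf{Q}_g\nabla u\|_{\partial T}^2\Big)^{1/2}.
\]
For generic smooth $u$ the right-hand side is of exact order $h^{k-1}$.

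So with the weights $h_T^{-3}$ and $h_T^{-1}$ in $s_T$, what your argument (and the paper's) actually proves, and the best that can be proved, is $\3bar Q_hu-u_h\3bar\le Ch^{k-1}\|u\|_{k+1}$. This agrees with the following theorem and with the observed energy-norm rates $1,2,3$ for $k=2,3,4$ in the tables.
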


\begin{proof}
Testing the strong form of the governing equation in \eqref{model} against the discrete weak differential operator $\nabla_w \cdot (\bmu v) - \frac{1}{2}\sum_{i,j=1}^d \partial^2_{ij,w}(a_{ij}v)$ for any $v \in V_h^0$ gives:
\begin{equation*}
\begin{split}
     &
    \sum_{T\in {\cal T}_h}(\nabla \cdot (\bmu u) - \frac{1}{2}\sum_{i,j=1}^d \partial^2_{ij}(a_{ij}u), \, \nabla_w \cdot (\bmu v) - \frac{1}{2}\sum_{i,j=1}^d \partial^2_{ij,w}(a_{ij}v) )_T\\
 =&   \sum_{T\in {\cal T}_h} (f, \nabla_w \cdot (\bmu v) - \frac{1}{2}\sum_{i,j=1}^d \partial^2_{ij,w}(a_{ij}v) )_T.
\end{split}
\end{equation*}
Because   $\nabla_w \cdot (\bmu v) - \frac{1}{2}\sum_{i,j=1}^d \partial^2_{ij,w}(a_{ij}v) \in P_{k-1}(T)$ on each element $T$, we can directly apply the commuting properties \eqref{l} and \eqref{l-2} to rewrite the left-hand side:
\begin{equation*}
a(Q_h u, v) =    \sum_{T\in {\cal T}_h}(f, \nabla_w \cdot (\bmu v) - \frac{1}{2}\sum_{i,j=1}^d \partial^2_{ij,w}(a_{ij}v) )_T.
\end{equation*}
Adding the stabilization term $s(Q_h u, v)$ to both sides of the equation yields:
\begin{equation}\label{eq:error_step1}
a(Q_h u, v) + s(Q_h u, v) = \sum_{T \in \mathcal{T}_h}  (f, \nabla_w \cdot (\bmu v) - \frac{1}{2}\sum_{i,j=1}^d \partial^2_{ij,w}(a_{ij}v) )_T + s(Q_h u, v).
\end{equation}
Subtracting the discrete LS-WG numerical scheme \eqref{ls} from \eqref{eq:error_step1}, we obtain the error equation:
\begin{equation*}
a(e_h, v) + s(e_h, v) = s(Q_h u, v),
\end{equation*}
where $e_h = Q_h u - u_h$. Taking the test function $v = e_h$ yields the relation:
\begin{equation}\label{es1}
\3bar e_h \3bar^2 = s(Q_h u, e_h).
\end{equation}

Applying the Cauchy-Schwarz inequality and the  trace inequality \eqref{tracein}, we   deduce:
\begin{equation}\label{sa}
\begin{aligned}
&\quad s(Q_h u, e_h)\\ &\leq s^{1/2}(Q_h u, Q_h u) s^{1/2}(e_h, e_h) \\
&\leq  ( \sum_{T\in \mathcal{T}_h} h_T^{-3}\|Q_0u -Q_b u\|^2_{\partial T} + h_T^{-1} \|\nabla Q_0u - \mathbf{Q}_g(\nabla u)\|^2_{\partial T}  )^{1/2} \3bar e_h \3bar \\
&\leq C  ( \sum_{T\in \mathcal{T}_h} h_T^{-4}\|Q_0u - u\|^2_T +h_T^{-2} \|Q_0u - u\|^2_{1, T} + h_T^{-2}\|\nabla Q_0u - \nabla u\|^2_T\\& +  \|\nabla Q_0u - \nabla u\|^2_{1,T}  )^{1/2} \3bar e_h \3bar \\
&\leq C h^{k-1} \|u\|_{k+1} \3bar e_h \3bar.
\end{aligned}
\end{equation}
Dividing both sides by $\3bar e_h \3bar$ and combining this result with \eqref{es1} completes the proof.
\end{proof}

\begin{theorem}
Let $u \in H^{k+1}(\Omega)$ be the exact solution of \eqref{model}, and let $u_h \in V_h^0$ be its numerical approximation obtained from \eqref{ls}. There exists a constant $C > 0$, independent of $h$, such that
\begin{equation}
\3bar u - u_h \3bar \leq C h^{k-1} \|u\|_{k+1}.
\end{equation}
\end{theorem}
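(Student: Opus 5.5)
The natural route is the triangle inequality. I would write
$$\3bar u-u_h\3bar \le \3bar u-Q_h u\3bar + \3bar Q_h u-u_h\3bar .$$
The preceding theorem already gives $\3bar Q_h u-u_h\3bar\le Ch^k\|u\|_{k+1}\le Ch^{k-1}\|u\|_{k+1}$ for $h\le 1$. It therefore remains to bound the projection error $\3bar u-Q_h u\3bar$. Here $u$ is interpreted as the weak function $\{u|_T,u|_{\partial T},\nabla u|_{\partial T}\}$. Since the weak operators and the stabilizer are defined on $\W(T)$, the seminorm $\3bar\cdot\3bar$ extends to such functions.

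\textbf{Stabilizer part.} For $w=u-Q_hu$, the components are $w_0=u-Q_0u$, $w_b=u-Q_bu$ and $\mathbf{w}_g=\nabla u-\mathbf{Q}_g\nabla u$. Then
$$w_0-w_b = (Q_bu-u)+(u-Q_0u), \qquad \nabla w_0-\mathbf{w}_g=(\mathbf{Q}_g\nabla u-\nabla u)+(\nabla u-\nabla Q_0u).$$
The $L^2$ projections $Q_b,\mathbf{Q}_g$ on $e$ are bounded by the trace norms, so $\|Q_bu-u\|_{\partial T}\le \|Q_0u-u\|_{\partial T}$ (because $Q_0u|_e\in P_k(e)$), and similarly for $\mathbf{Q}_g$ using $\nabla Q_0u|_e\in[P_{k-1}(e)]^d$. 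Applying the trace inequality \eqref{tracein} and the standard approximation estimates for $Q_0$ then gives
$$s(w,w)\le C\sum_T\left(h_T^{-4}\|u-Q_0u\|_T^2+h_T^{-2}|u-Q_0u|_{1,T}^2+|u-Q_0u|_{2,T}^2\right)\le Ch^{2k-2}\|u\|_{k+1}^2,$$
which repeats the computation in \eqref{sa}.

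\textbf{Least-squares part.} This is the step where the order $h^{k-1}$ (rather than $h^k$) enters. By the definitions \eqref{2.4} and \eqref{disgradient}, for the weak function $u$ we have $\partial^2_{ij,w}(a_{ij}u)=\mathcal{Q}^{(s)}_h(\partial^2_{ij}(a_{ij}u))$ and $\nabla_w\cdot(\bmu u)=\mathcal{Q}^{(k-1)}_h(\nabla\cdot(\bmu u))$; this is the same integration-by-parts computation as in Lemma \ref{Lemma5.1}, where no projections act on the data. Combining with Lemma \ref{Lemma5.1} itself, the weak operators applied to $u$ and to $Q_hu$ coincide. Hence $a(u-Q_hu,u-Q_hu)=0$.

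This relies on the paper's simplifying assumption of piecewise-constant coefficients. If one prefers not to use this identity, the term can instead be bounded directly via \eqref{dissec} and \eqref{disdiv} with inverse inequalities. The volume terms contribute $\|\partial^2_{ij}(a_{ij}(u-Q_0u))\|_T\le Ch^{k-1}\|u\|_{k+1,T}$, and the boundary terms are controlled by $C s_T(w,w)^{1/2}$ after the inverse estimates $\|\partial_j\varphi\|_{\partial T}\le Ch_T^{-3/2}\|\varphi\|_T$ and $\|\varphi\|_{\partial T}\le Ch_T^{-1/2}\|\varphi\|_T$. Either way we get $a(w,w)\le Ch^{2k-2}\|u\|_{k+1}^2$. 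Combining the two parts yields the claim.

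The main obstacle is this least-squares part: making sense of $\3bar\cdot\3bar$ on $u$, which does not belong to $V_h$, and checking that the boundary terms scale correctly. The first thing I would try is the commuting identity, which makes the term vanish; the inverse-inequality bound is the fallback.
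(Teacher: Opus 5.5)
Your proposal is correct and follows the paper's proof. Both use the triangle inequality $\3bar u-u_h\3bar\le\3bar u-Q_hu\3bar+\3bar Q_hu-u_h\3bar$ and the preceding theorem for the discrete error. Only its weaker $h^{k-1}$ rate is used, which is fortunate, since that is all the argument in \eqref{sa} actually yields. The projection error is controlled by the stabilizer bound $s(Q_hu,Q_hu)^{1/2}\le Ch^{k-1}\|u\|_{k+1}$ from \eqref{sa}.

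The one difference is how $\3bar\cdot\3bar$ is extended to $u$. The paper uses classical derivatives of $u$, so its least-squares part is the projection error $\|\partial^2_{ij}(a_{ij}u)-\mathcal{Q}^{(s)}_h\partial^2_{ij}(a_{ij}u)\|_T=O(h^{k-1})$, plus an $O(h^k)$ divergence term. You instead apply the discrete weak operators to $\{u,u|_{\partial T},\nabla u|_{\partial T}\}$, which makes that part vanish by the commuting identity. In either reading the stabilizer term determines the rate.
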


\begin{proof}
Applying the triangle inequality to the discrete energy norm, we have:
\begin{equation}\label{eq:triangle_error}
\3bar u - u_h \3bar \leq \3bar u - Q_h u \3bar + \3bar Q_h u - u_h \3bar.
\end{equation}
Using the definitions of the  bilinear forms and \eqref{sa}, the approximation error can be expanded and bounded via standard projection estimates:
\begin{equation*}
\begin{aligned}
\3bar u - Q_h u \3bar &\leq  ( \sum_{T\in \mathcal{T}_h} \|\nabla \cdot (\bmu u) - \mathcal{Q}_h^{(k-1)} \nabla \cdot (\bmu u)\|_T^2\\
&+  \|\frac{1}{2} \sum_{i,j=1}^d (\partial_{ij}^2 (a_{ij}u) - \mathcal{Q}_h^{(s)} \partial_{ij}^2 (a_{ij}u)) \|_T^2 + s(Q_h u, Q_h u)  )^{1/2} \\
&\leq C h^k \|u\|_{k+1} + C h^{k-1} \|u\|_{k+1} \leq
C h^{k-1} \|u\|_{k+1}.
\end{aligned}
\end{equation*}
Substituting this bound along with the   error estimate \eqref{estimate1} into \eqref{eq:triangle_error} yields:
\begin{equation*}
\begin{aligned}
\3bar u - u_h \3bar &\leq C h^{k-1} \|u\|_{k+1}.
\end{aligned}
\end{equation*}
This concludes the proof.
\end{proof}

\section{Numerical experiment}

In the numerical test,  we solve the model Fokker-Planck problem \eqref{model}, where 
   $\Omega=(0,1)\times(0,1)$, and
\an{\label{sol1} \ad{ \ \bmu&=\p{0\\0}, \ \ (a_{ij})=\p{4&0 \\ 0& 4}, 
          \ \ u = 2^8(x-x^2)^2(y-y^2)^2,  }  }
            or
\an{\label{sol2} \ad{ \ \bmu&=\p{1\\1}, \ \ (a_{ij})=\p{4&2 \\ 2& 4}, 
          \ \ u = 2^8(x-x^2)^2(y-y^2)^2.  }  }

The computations are done on four types of grids, ordered by their level of difficulty: the square grids shown in Figure \ref{f-g1}, the triangular grids shown in Figure \ref{f-g2}, the quadrilateral grids shown in Figure \ref{f-g3}, and the non-convex pentagonal grids shown in Figure \ref{f-g4}. We apply the least squares weak Galerkin method \eqref{ls} using \(P_{k}\) polynomial spaces (\(k=2, 3, 4\)) in \eqref{V-T} to solve the two problems. The results are listed in Tables \ref{t1}-\ref{t8}, which show that the optimal orders of convergence are achieved. It appears that the solutions for \eqref{sol1} are consistently more accurate than those for \eqref{sol2}.

\begin{figure}[H]
 \begin{center}\setlength\unitlength{1.0pt}
\begin{picture}(340,110)(0,0) 
 \put(0,102){$G_1:$}  \put(115,102){$G_2:$} \put(230,102){$G_3:$} 
  
  \put(0,-10){\includegraphics[width=330pt]{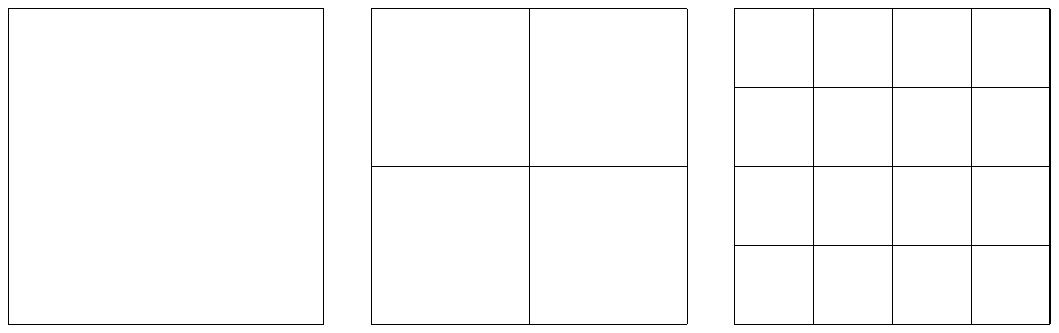}}   
 \end{picture}\end{center}
\caption{The square grids, used in Tables \ref{t1}--\ref{t2}. }\label{f-g1}
\end{figure}

\begin{table}[H]
  \centering  \renewcommand{\arraystretch}{1.1}
  \caption{Error profile for computing \eqref{sol1}, on square grids in Figure \ref{f-g1}. }
  \label{t1}
\begin{tabular}{c|cc|cc}
\hline
  $G_i$ & \quad $\| u-u_h\|_{0}$ & $O(h^r)$ & \  $\3bar u-u_h\3bar $& $O(h^r)$  \\ \hline
    &  \multicolumn{4}{c}{By the $P_2$ WG finite element, $k=2$ in \eqref{V-T}  }   \\
\hline  
 1&    0.146E+01 & --- &    0.754E+02 & --- \\
 2&    0.220E+00 &  2.7&    0.260E+02 &  1.5\\
 3&    0.329E-01 &  2.7&    0.167E+02 &  0.6\\
 4&    0.426E-02 &  3.0&    0.901E+01 &  0.9\\
 5&    0.533E-03 &  3.0&    0.461E+01 &  1.0\\
 6&    0.660E-04 &  3.0&    0.232E+01 &  1.0\\
\hline 
    &  \multicolumn{4}{c}{By the $P_3$ WG finite element, $k=3$ in \eqref{V-T}  }   \\
\hline  
 1&    0.158E+01 & --- &    0.845E+02 & --- \\
 2&    0.178E+00 &  3.1&    0.259E+02 &  1.7\\
 3&    0.142E-01 &  3.6&    0.694E+01 &  1.9\\
 4&    0.947E-03 &  3.9&    0.184E+01 &  1.9\\
 5&    0.594E-04 &  4.0&    0.467E+00 &  2.0\\
 \hline  
    &  \multicolumn{4}{c}{By the $P_4$ WG finite element, $k=4$ in \eqref{V-T}  }   \\
\hline  
 1&    0.246E+01 & --- &    0.723E+03 & --- \\
 2&    0.132E+00 &  4.2&    0.141E+03 &  2.4\\
 3&    0.510E-02 &  4.7&    0.243E+02 &  2.5\\
 4&    0.171E-03 &  4.9&    0.324E+01 &  2.9\\
 5&    0.542E-05 &  5.0&    0.412E+00 &  3.0\\
\hline 
    \end{tabular}%
\end{table}%

\begin{table}[H]
  \centering  \renewcommand{\arraystretch}{1.1}
  \caption{Error profile for computing \eqref{sol2}, on square grids in Figure \ref{f-g1}. }
  \label{t2}
\begin{tabular}{c|cc|cc}
\hline
  $G_i$ & \quad $\| u-u_h\|_{0}$ & $O(h^r)$ & \  $\3bar u-u_h\3bar $& $O(h^r)$  \\ \hline
    &  \multicolumn{4}{c}{By the $P_2$ WG finite element, $k=2$ in \eqref{V-T}  }   \\
\hline  
 1&    0.667E+01 & --- &    0.196E+03 & --- \\
 2&    0.319E+00 &  4.4&    0.581E+02 &  1.8\\
 3&    0.334E-01 &  3.3&    0.357E+02 &  0.7\\
 4&    0.435E-02 &  2.9&    0.186E+02 &  0.9\\
 5&    0.535E-03 &  3.0&    0.940E+01 &  1.0\\
 6&    0.658E-04 &  3.0&    0.471E+01 &  1.0\\
\hline 
    &  \multicolumn{4}{c}{By the $P_3$ WG finite element, $k=3$ in \eqref{V-T}  }   \\
\hline  
 1&    0.474E+01 & --- &    0.396E+03 & --- \\
 2&    0.193E+00 &  4.6&    0.167E+03 &  1.2\\
 3&    0.139E-01 &  3.8&    0.555E+02 &  1.6\\
 4&    0.951E-03 &  3.9&    0.152E+02 &  1.9\\
 5&    0.607E-04 &  4.0&    0.388E+01 &  2.0\\
 \hline  
    &  \multicolumn{4}{c}{By the $P_4$ WG finite element, $k=4$ in \eqref{V-T}  }   \\
\hline  
 1&    0.332E+01 & --- &    0.160E+04 & --- \\
 2&    0.161E+00 &  4.4&    0.301E+03 &  2.4\\
 3&    0.520E-02 &  5.0&    0.514E+02 &  2.6\\
 4&    0.171E-03 &  4.9&    0.684E+01 &  2.9\\
 5&    0.564E-05 &  4.9&    0.868E+00 &  3.0\\
\hline 
    \end{tabular}%
\end{table}%

\begin{figure}[H]
 \begin{center}\setlength\unitlength{1.0pt}
\begin{picture}(340,110)(0,0) 
 \put(0,102){$G_1:$}  \put(115,102){$G_2:$} \put(230,102){$G_3:$} 
  
  \put(0,-10){\includegraphics[width=330pt]{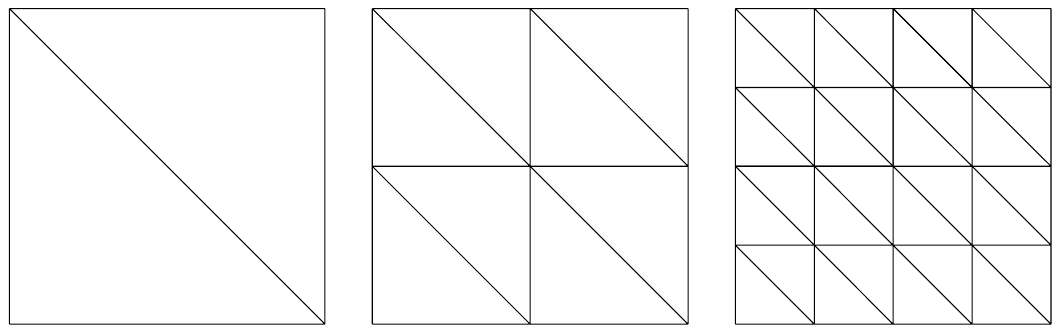}}   
 \end{picture}\end{center}
\caption{The triangular grids, used in Tables \ref{t3}--\ref{t4}. }\label{f-g2}
\end{figure}

\begin{table}[H]
  \centering  \renewcommand{\arraystretch}{1.02}
  \caption{Error profile for \eqref{sol1}, on triangular grids in Figure \ref{f-g2}. }
  \label{t3}
\begin{tabular}{c|cc|cc}
\hline
  $G_i$ & \quad $\| u-u_h\|_{0}$ & $O(h^r)$ & \  $\3bar u-u_h\3bar $& $O(h^r)$  \\ \hline
    &  \multicolumn{4}{c}{By the $P_2$ WG finite element, $k=2$ in \eqref{V-T}  }   \\
\hline  
 1&    0.454E+00 & --- &    0.154E+02 & --- \\
 2&    0.110E+00 &  2.0&    0.114E+02 &  0.4\\
 3&    0.121E-01 &  3.2&    0.755E+01 &  0.6\\
 4&    0.120E-02 &  3.3&    0.396E+01 &  0.9\\
 5&    0.138E-03 &  3.1&    0.200E+01 &  1.0\\
 6&    0.167E-04 &  3.0&    0.100E+01 &  1.0\\
\hline 
    &  \multicolumn{4}{c}{By the $P_3$ WG finite element, $k=3$ in \eqref{V-T}  }   \\
\hline  
 1&    0.339E+00 & --- &    0.234E+02 & --- \\
 2&    0.414E-01 &  3.0&    0.120E+02 &  1.0\\
 3&    0.217E-02 &  4.3&    0.320E+01 &  1.9\\
 4&    0.123E-03 &  4.1&    0.840E+00 &  1.9\\
 5&    0.696E-05 &  4.1&    0.213E+00 &  2.0\\
 \hline  
    &  \multicolumn{4}{c}{By the $P_4$ WG finite element, $k=4$ in \eqref{V-T}  }   \\
\hline  
 1&    0.104E+00 & --- &    0.314E+02 & --- \\
 2&    0.566E-02 &  4.2&    0.541E+01 &  2.5\\
 3&    0.275E-03 &  4.4&    0.106E+01 &  2.3\\
 4&    0.912E-05 &  4.9&    0.148E+00 &  2.8\\
 5&    0.277E-06 &  5.0&    0.189E-01 &  3.0\\
\hline 
    \end{tabular}%
\end{table}

\begin{table}[H]
  \centering  \renewcommand{\arraystretch}{1.02}
  \caption{Error profile for \eqref{sol2}, on triangular grids in Figure \ref{f-g2}. }
  \label{t4}
\begin{tabular}{c|cc|cc}
\hline
  $G_i$ & \quad $\| u-u_h\|_{0}$ & $O(h^r)$ & \  $\3bar u-u_h\3bar $& $O(h^r)$  \\ \hline
    &  \multicolumn{4}{c}{By the $P_2$ WG finite element, $k=2$ in \eqref{V-T}  }   \\
\hline  
 1&    0.461E+00 & --- &    0.266E+02 & --- \\
 2&    0.130E+00 &  1.8&    0.265E+02 &  0.0\\
 3&    0.187E-01 &  2.8&    0.185E+02 &  0.5\\
 4&    0.179E-02 &  3.4&    0.985E+01 &  0.9\\
 5&    0.173E-03 &  3.4&    0.500E+01 &  1.0\\
 6&    0.199E-04 &  3.1&    0.251E+01 &  1.0\\
\hline 
    &  \multicolumn{4}{c}{By the $P_3$ WG finite element, $k=3$ in \eqref{V-T}  }   \\
\hline  
 1&    0.309E+00 & --- &    0.505E+02 & --- \\
 2&    0.411E-01 &  2.9&    0.291E+02 &  0.8\\
 3&    0.212E-02 &  4.3&    0.785E+01 &  1.9\\
 4&    0.120E-03 &  4.1&    0.208E+01 &  1.9\\
 5&    0.690E-05 &  4.1&    0.529E+00 &  2.0\\
 \hline  
    &  \multicolumn{4}{c}{By the $P_4$ WG finite element, $k=4$ in \eqref{V-T}  }   \\
\hline  
 1&    0.103E+00 & --- &    0.697E+02 & --- \\
 2&    0.554E-02 &  4.2&    0.128E+02 &  2.4\\
 3&    0.299E-03 &  4.2&    0.257E+01 &  2.3\\
 4&    0.116E-04 &  4.7&    0.359E+00 &  2.8\\
 5&    0.369E-06 &  5.0&    0.461E-01 &  3.0\\
\hline 
    \end{tabular}%
\end{table}%

\begin{figure}[H]
 \begin{center}\setlength\unitlength{1.0pt}
\begin{picture}(340,110)(0,0) 
 \put(0,102){$G_1:$}  \put(115,102){$G_2:$} \put(230,102){$G_3:$} 
  
  \put(0,-10){\includegraphics[width=330pt]{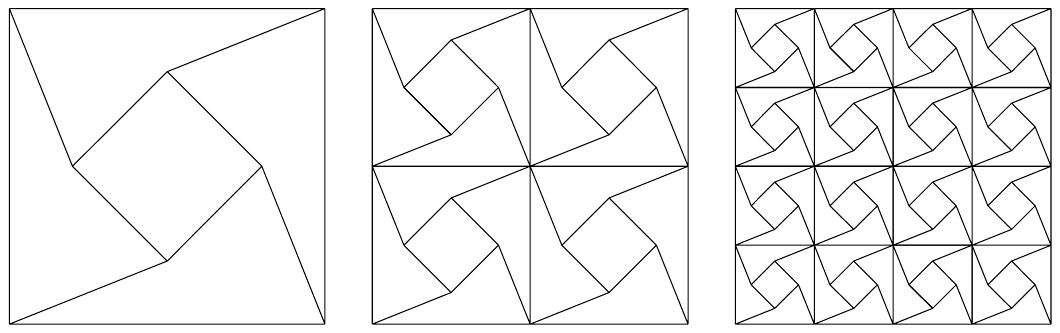}}   
 \end{picture}\end{center}
\caption{The quadrilateral grids, used in Tables \ref{t5}--\ref{t6}. }\label{f-g3}
\end{figure}

\begin{table}[H]
  \centering  \renewcommand{\arraystretch}{1.1}
  \caption{Error profile for computing \eqref{sol1}, on quadrilateral grids in Figure \ref{f-g3}. }
  \label{t5}
\begin{tabular}{c|cc|cc}
\hline
  $G_i$ & \quad $\| u-u_h\|_{0}$ & $O(h^r)$ & \  $\3bar u-u_h\3bar $& $O(h^r)$  \\ \hline
    &  \multicolumn{4}{c}{By the $P_2$ WG finite element, $k=2$ in \eqref{V-T}  }   \\
\hline  
 1&    0.556E+00 & --- &    0.102E+03 & --- \\
 2&    0.105E+00 &  2.4&    0.623E+02 &  0.7\\
 3&    0.133E-01 &  3.0&    0.312E+02 &  1.0\\
 4&    0.161E-02 &  3.0&    0.160E+02 &  1.0\\
 5&    0.199E-03 &  3.0&    0.807E+01 &  1.0\\
\hline 
    &  \multicolumn{4}{c}{By the $P_3$ WG finite element, $k=3$ in \eqref{V-T}  }   \\
\hline  
 1&    0.239E+00 & --- &    0.160E+03 & --- \\
 2&    0.281E-01 &  3.1&    0.608E+02 &  1.4\\
 3&    0.244E-02 &  3.5&    0.206E+02 &  1.6\\
 4&    0.156E-03 &  4.0&    0.533E+01 &  1.9\\
 5&    0.980E-05 &  4.0&    0.134E+01 &  2.0\\
 \hline  
    &  \multicolumn{4}{c}{By the $P_4$ WG finite element, $k=4$ in \eqref{V-T}  }   \\
\hline  
 1&    0.309E+00 & --- &    0.461E+03 & --- \\
 2&    0.151E-01 &  4.4&    0.801E+02 &  2.5\\
 3&    0.502E-03 &  4.9&    0.108E+02 &  2.9\\
 4&    0.159E-04 &  5.0&    0.131E+01 &  3.0\\
\hline 
    \end{tabular}%
\end{table}%

\begin{table}[H]
  \centering  \renewcommand{\arraystretch}{1.1}
  \caption{Error profile for computing \eqref{sol2}, on quadrilateral grids in Figure \ref{f-g3}. }
  \label{t6}
\begin{tabular}{c|cc|cc}
\hline
  $G_i$ & \quad $\| u-u_h\|_{0}$ & $O(h^r)$ & \  $\3bar u-u_h\3bar $& $O(h^r)$  \\ \hline
    &  \multicolumn{4}{c}{By the $P_2$ WG finite element, $k=2$ in \eqref{V-T}  }   \\
\hline  
 1&    0.581E+00 & --- &    0.211E+03 & --- \\
 2&    0.104E+00 &  2.5&    0.131E+03 &  0.7\\
 3&    0.135E-01 &  2.9&    0.655E+02 &  1.0\\
 4&    0.164E-02 &  3.0&    0.336E+02 &  1.0\\
 5&    0.200E-03 &  3.0&    0.169E+02 &  1.0\\
\hline 
    &  \multicolumn{4}{c}{By the $P_3$ WG finite element, $k=3$ in \eqref{V-T}  }   \\
\hline  
 1&    0.231E+00 & --- &    0.335E+03 & --- \\
 2&    0.301E-01 &  2.9&    0.127E+03 &  1.4\\
 3&    0.243E-02 &  3.6&    0.430E+02 &  1.6\\
 4&    0.156E-03 &  4.0&    0.111E+02 &  1.9\\
 5&    0.979E-05 &  4.0&    0.280E+01 &  2.0\\
 \hline  
    &  \multicolumn{4}{c}{By the $P_4$ WG finite element, $k=4$ in \eqref{V-T}  }   \\
\hline  
 1&    0.316E+00 & --- &    0.956E+03 & --- \\
 2&    0.152E-01 &  4.4&    0.166E+03 &  2.5\\
 3&    0.503E-03 &  4.9&    0.223E+02 &  2.9\\
 4&    0.182E-04 &  4.8&    0.271E+01 &  3.0\\
 5&    0.369E-06 &  5.0&    0.461E-01 &  3.0\\
\hline 
    \end{tabular}%
\end{table}%

\begin{figure}[H]
 \begin{center}\setlength\unitlength{1.0pt}
\begin{picture}(340,110)(0,0) 
 \put(0,102){$G_1:$}  \put(115,102){$G_2:$} \put(230,102){$G_3:$} 
  
  \put(0,-10){\includegraphics[width=330pt]{ggrid4.pdf}}   
 \end{picture}\end{center}
\caption{The pentagonal grids, used in Tables \ref{t7}--\ref{t8}. }\label{f-g4}
\end{figure}

\begin{table}[H]
  \centering  \renewcommand{\arraystretch}{1.1}
  \caption{Error profile for computing \eqref{sol1}, on pentagonal grids in Figure \ref{f-g4}. }
  \label{t7}
\begin{tabular}{c|cc|cc}
\hline
  $G_i$ & \quad $\| u-u_h\|_{0}$ & $O(h^r)$ & \  $\3bar u-u_h\3bar $& $O(h^r)$  \\ \hline
    &  \multicolumn{4}{c}{By the $P_2$ WG finite element, $k=2$ in \eqref{V-T}  }   \\
\hline  
 1&    0.746E+00 & --- &    0.525E+02 & --- \\
 2&    0.202E+00 &  1.9&    0.291E+02 &  0.9\\
 3&    0.274E-01 &  2.9&    0.150E+02 &  1.0\\
 4&    0.346E-02 &  3.0&    0.756E+01 &  1.0\\
 5&    0.429E-03 &  3.0&    0.380E+01 &  1.0\\
 6&    0.536E-04 &  3.0&    0.190E+01 &  1.0\\
\hline 
    &  \multicolumn{4}{c}{By the $P_3$ WG finite element, $k=3$ in \eqref{V-T}  }   \\
\hline  
 1&    0.940E+00 & --- &    0.136E+03 & --- \\
 2&    0.141E+00 &  2.7&    0.708E+02 &  0.9\\
 3&    0.919E-02 &  3.9&    0.187E+02 &  1.9\\
 4&    0.590E-03 &  4.0&    0.486E+01 &  1.9\\
 5&    0.369E-04 &  4.0&    0.123E+01 &  2.0\\ 
\hline 
    \end{tabular}%
\end{table}%

\begin{table}[H]
  \centering  \renewcommand{\arraystretch}{1.1}
  \caption{Error profile for computing \eqref{sol2}, on pentagonal grids in Figure \ref{f-g4}. }
  \label{t8}
\begin{tabular}{c|cc|cc}
\hline
  $G_i$ & \quad $\| u-u_h\|_{0}$ & $O(h^r)$ & \  $\3bar u-u_h\3bar $& $O(h^r)$  \\ \hline
    &  \multicolumn{4}{c}{By the $P_2$ WG finite element, $k=2$ in \eqref{V-T}  }   \\
\hline  
 1&    0.674E+01 & --- &    0.138E+03 & --- \\
 2&    0.230E+00 &  4.9&    0.571E+02 &  1.3\\
 3&    0.270E-01 &  3.1&    0.287E+02 &  1.0\\
 4&    0.336E-02 &  3.0&    0.143E+02 &  1.0\\
 5&    0.427E-03 &  3.0&    0.716E+01 &  1.0\\
 6&    0.535E-04 &  3.0&    0.358E+01 &  1.0\\
\hline 
    &  \multicolumn{4}{c}{By the $P_3$ WG finite element, $k=3$ in \eqref{V-T}  }   \\
\hline  
 1&    0.978E+00 & --- &    0.288E+03 & --- \\
 2&    0.145E+00 &  2.8&    0.143E+03 &  1.0\\
 3&    0.918E-02 &  4.0&    0.372E+02 &  1.9\\
 4&    0.593E-03 &  4.0&    0.965E+01 &  1.9\\
 5&    0.372E-04 &  4.0&    0.244E+01 &  2.0\\
 \hline   
    \end{tabular}%
\end{table}%

\end{document}